\newtheorem{thm}{Theorem}[section]
\newtheorem{lem}[thm]{Lemma}
\newtheorem{rem}[thm]{Remark}
\theoremstyle{definition}
\newcommand{\scr}[1]{\mathscr #1}
\definecolor{wco}{rgb}{0.5,0.2,0.3}
\numberwithin{equation}{section} \theoremstyle{remark}
\newcommand{\ua}{\uparrow}
\title{{\bf
 Quantitative Propagation of Chaos in $L^\eta(\eta\in(0,1])$-Wasserstein distance for Mean Field Interacting Particle System}\footnote{Supported in
 part by  National Key R\&D Program of China (No. 2022YFA1006000) and NNSFC (12271398).} }
\author{
{\bf   Xing Huang  }\\
\footnotesize{ Center for Applied Mathematics, Tianjin
University, Tianjin 300072, China}\\
\footnotesize{  xinghuang@tju.edu.cn}}
\begin{document}
\allowdisplaybreaks
\def\R{\mathbb R}  \def\ff{\frac} \def\ss{\sqrt} \def\B{\mathbf
B} \def\W{\mathbb W}
\def\N{\mathbb N} \def\kk{\kappa} \def\m{{\bf m}}
\def\ee{\varepsilon}\def\ddd{D^*}
\def\dd{\delta} \def\DD{\Delta} \def\vv{\varepsilon} \def\rr{\rho}
\def\<{\langle} \def\>{\rangle} \def\GG{\Gamma} \def\gg{\gamma}
  \def\nn{\nabla} \def\pp{\partial} \def\E{\mathbb E}
\def\d{\text{\rm{d}}} \def\bb{\beta} \def\aa{\alpha} \def\D{\scr D}
  \def\si{\sigma} \def\ess{\text{\rm{ess}}}
\def\beg{\begin} \def\beq{\begin{equation}}  \def\F{\scr F}
\def\Ric{\text{\rm{Ric}}} \def\Hess{\text{\rm{Hess}}}
\def\e{\text{\rm{e}}} \def\ua{\underline a} \def\OO{\Omega}  \def\oo{\omega}
 \def\tt{\tilde} \def\Ric{\text{\rm{Ric}}}
\def\cut{\text{\rm{cut}}} \def\P{\mathbb P} \def\ifn{I_n(f^{\bigotimes n})}
\def\C{\scr C}      \def\aaa{\mathbf{r}}     \def\r{r}
\def\gap{\text{\rm{gap}}} \def\prr{\pi_{{\bf m},\varrho}}  \def\r{\mathbf r}
\def\Z{\mathbb Z} \def\vrr{\varrho}
\def\L{\scr L}\def\Tt{\tt} \def\TT{\tt}\def\II{\mathbb I}
\def\i{{\rm in}}\def\Sect{{\rm Sect}}  \def\H{\mathbb H}
\def\M{\scr M}\def\Q{\mathbb Q} \def\texto{\text{o}}
\def\Rank{{\rm Rank}} \def\B{\scr B} \def\i{{\rm i}} \def\HR{\hat{\R}^d}
\def\to{\rightarrow}\def\l{\ell}\def\iint{\int}
\def\EE{\scr E}\def\Cut{{\rm Cut}}
\def\A{\scr A} \def\Lip{{\rm Lip}}
\def\BB{\scr B}\def\Ent{{\rm Ent}}\def\L{\scr L}
\def\R{\mathbb R}  \def\ff{\frac} \def\ss{\sqrt} \def\B{\mathbf
B}
\def\N{\mathbb N} \def\kk{\kappa} \def\m{{\bf m}}
\def\dd{\delta} \def\DD{\Delta} \def\vv{\varepsilon} \def\rr{\rho}
\def\<{\langle} \def\>{\rangle} \def\GG{\Gamma} \def\gg{\gamma}
  \def\nn{\nabla} \def\pp{\partial} \def\E{\mathbb E}
\def\d{\text{\rm{d}}} \def\bb{\beta} \def\aa{\alpha} \def\D{\scr D}
  \def\si{\sigma} \def\ess{\text{\rm{ess}}}
\def\beg{\begin} \def\beq{\begin{equation}}  \def\F{\scr F}
\def\Ric{\text{\rm{Ric}}} \def\Hess{\text{\rm{Hess}}}
\def\e{\text{\rm{e}}} \def\ua{\underline a} \def\OO{\Omega}  \def\oo{\omega}
 \def\tt{\tilde} \def\Ric{\text{\rm{Ric}}}
\def\cut{\text{\rm{cut}}} \def\P{\mathbb P} \def\ifn{I_n(f^{\bigotimes n})}
\def\C{\scr C}      \def\aaa{\mathbf{r}}     \def\r{r}
\def\gap{\text{\rm{gap}}} \def\prr{\pi_{{\bf m},\varrho}}  \def\r{\mathbf r}
\def\Z{\mathbb Z} \def\vrr{\varrho}
\def\L{\scr L}\def\Tt{\tt} \def\TT{\tt}\def\II{\mathbb I}
\def\i{{\rm in}}\def\Sect{{\rm Sect}}  \def\H{\mathbb H}
\def\M{\scr M}\def\Q{\mathbb Q} \def\texto{\text{o}} \def\LL{\Lambda}
\def\Rank{{\rm Rank}} \def\B{\scr B} \def\i{{\rm i}} \def\HR{\hat{\R}^d}
\def\to{\rightarrow}\def\l{\ell}
\def\8{\infty}\def\I{1}\def\U{\scr U} \def\n{{\mathbf n}}
\maketitle

\begin{abstract} In this paper, quantitative propagation of chaos in $L^\eta$($\eta\in(0,1]$)-Wasserstein distance for mean field interacting particle system is derived, where the diffusion coefficient is allowed to be interacting and the initial distribution of interacting particle system converges to that of the limit equation in $L^1$-Wasserstein distance.  The non-degenerate and second order system are investigated respectively and the main tool relies on the gradient estimate of the decoupled SDEs.
 \end{abstract}

\noindent
 AMS subject Classification:\  60H10, 60K35, 82C22.   \\
\noindent
 Keywords: Mean field interacting particle system, Wasserstein distance, McKean-Vlasov SDEs, Quantitative propagation of chaos, Gradient estimate
 \vskip 2cm
\section{Introduction}
In the mean field interacting particle system, where the coefficients depend on the empirical distribution of the particles, as the number of particles goes to infinity, the limit equation of a single particle is distribution dependent stochastic differential equation(SDE), which is also called McKean-Vlasov SDE in the literature due to the work in \cite{McKean}, one can refer to the recent monograph \cite{WR2024} for plentiful contributions on this type SDE. This aforementioned limit phenomenon is related to propagation of chaos, which can be viewed as a dynamical version of Kac's chaotic property, which is called the Boltzmann property in \cite{Kac}.

Let $(E,\rho)$ be a Polish space and $o\in E$ be a fixed point. Let $\scr P(E)$ be the set of all probability measures on $E$ equipped with the weak topology. For $\eta>0$, let
$$\scr P_\eta(E):=\big\{\mu\in \scr P(E): \mu(\rho(o,\cdot)^\eta)<\infty\big\},$$
which is a Polish space under the $L^\eta$-Wasserstein distance
$$\W_\eta(\mu,\nu)= \inf_{\pi\in \mathbf{C}(\mu,\nu)}\left\{\int_{E\times E} \rho(x,y)^\eta \pi(\d x,\d y)\right\}^{\frac{1}{1\vee \eta}},\ \  \mu,\nu\in \scr P_\eta(E),$$ where $\mathbf{C}(\mu,\nu)$ is the set of all couplings of $\mu$ and $\nu$.

When $E=\R^d,\eta\in(0,1]$,
  the following dual formula holds:
$$\W_\eta(\gamma,\tilde{\gamma})=\sup_{[f]_{\eta}\leq 1}|\gamma(f)-\tilde{\gamma}(f)|,\ \ \gamma,\tilde{\gamma}\in \scr P_\eta(\R^d),$$
where $[f]_\eta:=\sup_{x\neq y}\frac{|f(x)-f(y)|}{|x-y|^\eta}$, see for instance \cite[Theorem 5.10]{Chen04}.

In the mean filed interacting particle system, we usually need to consider finitely many particles so that it is natural to consider the distance on $\scr P_\eta((\R^d)^m)$ for any $m\geq 2$. To this end, for any $m\geq 2$ and $\eta\in(0,1]$, we adopt the following distance on $(\R^d)^m$:
$$\|x-y\|_{1,\eta}=\sum_{i=1}^m|x^i-y^i|^\eta,\ \ x=(x^1,x^2,\cdots,x^m), y=(y^1,y^2,\cdots,y^m)\in(\R^d)^m.$$
Define the associated Wasserstein distance:
\begin{align}\label{dew}\tilde{\W}_\eta(\mu,\nu)=\inf_{\pi\in\mathbf{C}(\mu,\nu)}\int_{(\R^d)^m\times (\R^d)^m}\|x-y\|_{1,\eta}\pi(\d x,\d y),\ \ \mu,\nu\in \scr P_\eta((\R^d)^m).
\end{align}
In this case, the dual formula becomes
\begin{align}\label{dfa}\tilde{\W}_\eta(\gamma,\tilde{\gamma})=\sup_{[f]_{1,\eta}\leq 1}|\gamma(f)-\tilde{\gamma}(f)|,\ \ \gamma,\tilde{\gamma}\in \scr P_\eta((\R^d)^m),
\end{align}
where \begin{align}\label{1et}[f]_{1,\eta}:=\sup_{x\neq y}\frac{|f(x)-f(y)|}{\|x-y\|_{1,\eta}}.\end{align}
For any $k\geq 1, m\geq 1$, let
$$ C_b^k((\R^d)^m):= \big\{f: (\R^d)^m\to  \R \  \text{has bounded and continuous up to $k$ order derivatives}\big\}. $$
 Noting that for any $\gamma,\tilde{\gamma}\in \scr P_\eta((\R^d)^m)$, $C_b^2((\R^d)^m)\cap\{f:(\R^d)^m\to\R, [f]_{1,\eta}\leq 1\}$ is dense in $\{f:(\R^d)^m\to\R, [f]_{1,\eta}\leq 1\}$ under $L^1(\gamma+\tilde{\gamma})$, it holds
\begin{align}\label{dfa1}\tilde{\W}_\eta(\gamma,\tilde{\gamma})=\sup_{f\in C_b^2((\R^d)^m), [f]_{1,\eta}\leq 1}|\gamma(f)-\tilde{\gamma}(f)|,\ \ \gamma,\tilde{\gamma}\in \scr P_\eta((\R^d)^m).
\end{align}
To introduce the mean field interacting particle system, let $T>0$, $W_t$ be an $n$-dimensional Brownian motion on some complete filtration probability space $(\Omega, \scr F, (\scr F_t)_{\in[0,T]},\P)$. Let $X_0$ be an $\F_0$-measurable random variable,
$N\ge1$ be an integer and $(X_0^i,W^i_t)_{1\le i\le N}$ be i.i.d.\,copies of $(X_0,W_t).$
Let $b:[0,T]\times \R^d\times\scr P(\R^d)\to\R^d$ and $\sigma:[0,T]\times \R^d\times\scr P(\R^d)\to\R^d\otimes\R^{n}$ be measurable and bounded on bounded set. Consider the non-interacting particle system, which solves independent McKean-Vlasov SDEs:
\begin{align}\label{MVD}\d X_t^i= b_t(X_t^i, \L_{X_t^i})\d t+  \sigma_t(X^i_t,\L_{X_t^i}) \d W^i_t,\ \ 1\leq i\leq N
\end{align}
for $\L_{X_t^i}$ being the distribution of $X_t^i$. The associated mean field interacting particle system is
\begin{align}\label{mfi}
\d X^{i,N}_t=b_t(X_t^{i,N}, \hat\mu_t^N)\d t+\sigma_t(X^{i,N}_t, \hat\mu_t^N) \d W^i_t,\ \ 1\leq i\leq N,
\end{align}
where $\hat\mu_t^N$ is the empirical distribution of $(X_t^{i,N})_{1\leq i\leq N}$, i.e.
\begin{equation*}
 \hat\mu_t^N =\ff{1}{N}\sum_{j=1}^N\dd_{X_t^{j,N}}.
 \end{equation*}
Throughout the paper, we assume that the distribution of $(X_0^{i,N})_{1\leq i\leq N}$ is exchangeable. When \eqref{MVD} and \eqref{mfi} are well-posed, for any $\mu\in\scr P(\R^d)$, let $P_t^\ast\mu$ be the distribution of $X_t^1$ initial distribution $\mu$, and for any exchangeable $\mu^N\in\scr P((\R^d)^N)$, $1\leq k\leq N$, $(P_t^{[k],N})^\ast\mu^N$
be the distribution of $(X_t^{i,N})_{1\leq i\leq k}$ with initial distribution $\mu^N$. Moreover, let $\mu^{\otimes k}$ denote the $k$ independent product of $\mu$, i.e. $\mu^{\otimes k}=\prod_{i=1}^k\mu$.

We first recall some recent results on propagation of chaos for mean field interacting particle system. In \cite{BJW,JW,JW1}, the authors adopt the entropy method to derive the quantitative entropy-entropy type propagation of chaos for mean field particle system with additive noise and singular interaction kernel. Also in the additive noise case, the author of \cite{L21} obtains the sharp rate of entropy-entropy type propagation of chaos for particle system with bounded or Lipschitz continuous interaction kernel by the technique of BBGKY hierarchy.
 \cite{STW,SW} derive the quantitative propagation of chaos in $\W_2$-distance of mean field interacting particle system with regime-switching. \cite{CLY} considers the conditional propagation of chaos in $\W_p(p\geq 2)$-distance for mean field interacting particle system with common noise. For mean field interacting particle system with feedback control, \cite{WHGY} acquires the propagation of chaos in $\W_2$-distance. \cite{NW} proves the propagation of chaos in strong sense for mean field interacting particle system in the frame of stochastic variational inequalities, where the drift is of super-linear
growth and locally Lipschitz continuous and the diffusion is locally H\"{o}lder continuous. In \cite{HX23e}, the author proposes and obtains  the entropy-cost type propagation of chaos and the initial distribution of interacting particle system is allowed to be singular with that of the limit equation. For the uniform in time propagation of chaos in relative entropy, one can refer to \cite{LL,GBM,M,MRW} by the technique of uniform in time log-Sobolev inequality of the limit equation. As to the uniform in time propagation of chaos in $\W_1$-distance, see \cite{DEGZ,GBMEJP,LWZ,LMW} for the method of (asymptotic) reflecting coupling or (asymptotic) refined basic coupling.

When $b_t(x,\mu)=\int_{\R^d}\tilde{b}(x,y)\mu(\d y)$, $\sigma_t(x,\mu)=\int_{\R^d}\tilde{\sigma}(x,y)\mu(\d y)$ for some Lipschitz continuous functions $\tilde{b},\tilde{\sigma}$, $X_0^{i,N}=X_0^i, 1\leq i\leq N$, $\E|X_0^1|^2<\infty$, \cite{SZ} derives propagation of chaos in strong sense, i.e. the estimate for $\E|X_t^{1,N}-X_t^1|^2$ by using synchronous coupling argument and hence the propagation of chaos in $\W_\theta$-distance for $\theta\in[1,2]$ is obtained due to the fact that for any $\theta\in[1,2]$ and $1\leq k\leq N$,
$$\W_{\theta}(\L_{(X_t^{i,N})_{1\leq i\leq k}},\L_{(X_t^{i})_{1\leq i\leq k}})\leq \W_{2}(\L_{(X_t^{i,N})_{1\leq i\leq k}},\L_{(X_t^{i})_{1\leq i\leq k}})\leq \sqrt{k\E|X_t^{1,N}-X_t^1|^2}.$$
However, the above inequality does not hold for $\theta\in(0,1)$ since $\W_\theta$ for $\theta\in(0,1)$ is not comparable with $\W_2$. Instead, it follows from Jensen's inequality that for $\theta\in(0,1)$ that
$$\W_{\theta}(\L_{(X_t^{i,N})_{1\leq i\leq k}},\L_{(X_t^{i})_{1\leq i\leq k}})\leq \W_{2}(\L_{(X_t^{i,N})_{1\leq i\leq k}},\L_{(X_t^{i})_{1\leq i\leq k}})^\theta\leq (k\E|X_t^{1,N}-X_t^1|^2)^{\frac{\theta}{2}}.$$
This inequality will reduce the convergence rate of propagation of chaos in $\W_\theta$ for $\theta\in(0,1)$. Consequently, it seems that the synchronous coupling argument is no longer appropriate to investigate the quantitative propagation of chaos in $\W_\theta$ for $\theta\in(0,1)$.

As far as we know, there is not any result on the propagation of chaos in $L^\eta(\eta\in(0,1))$-Wasserstein distance for mean field particle system with interacting diffusion coefficients.
 In this paper, we will fill this gap in both non-degenerate and kinetic cases. To this end, we will first derive the strong propagation of chaos and then adopt the dual formula \eqref{dfa} instead of \eqref{dew}. The main idea is to establish Duhamel's formula between \eqref{MVD} and \eqref{mfi}. To this end, the backward Kolmogorov equation as well as gradient estimate for the decoupled SDE plays a crucial role.


The remaining of the paper is organized as follows: In section 2,  we study the quantitative  propagation of chaos in $\W_\eta(\eta\in(0,1)$-distance for non-degenerate mean field interacting particle system. In Section 3, the second order mean field interacting particle system model is investigated.


\section{Non-degenerate case}
For any $F\in C_b^1((\R^d)^N)$,
$(x^1,x^2,\cdots,x^N)\in(\R^d)^N$, and $1\leq i\leq N$, let $\nabla_i F(x^1,x^2,\cdots,x^N)$ denote the gradient of $F$ with respect to the $i$-th component $x^i$. Denote $\nabla^2_i=\nabla_i\nabla_i$.

In this section, we assume that $b_t(x,\mu)=\int_{\R^d}b^{(1)}_t(x,y)\mu(\d y)$, $\sigma_t(x,\mu)=\int_{\R^d}\tilde{\sigma}_t(x,y)\mu(\d y)$ for some measurable $b^{(1)}:[0,T]\times \R^d\times\R^d\to\R^d$ and $\tilde{\sigma}:[0,T]\times \R^d\times\R^d\to\R^d\otimes\R^n$.
\begin{enumerate}
\item[{\bf(A1)}] There exist constants $K_\sigma>0$ and $\delta\geq 1$ such that
\begin{align*}
&\|\nabla\tilde{\sigma}_t(\cdot,y)(x)\|+\|\nabla^2\tilde{\sigma}_t(\cdot,y)(x)\|\leq K_\sigma,\ \ \delta^{-1}\leq\sigma\sigma^\ast\leq \delta,\\
&\|\tilde{\sigma}_t(x,y)-\tilde{\sigma}_t(x,\tilde{y})\|_{HS}\leq K_\sigma|y-\tilde{y}|,\ \ t\in[0,T], x,y,\tilde{y}\in\R^d.
\end{align*}
\item[{\bf(A2)}] There exists a constant $K_b>0$ such that
    \begin{align*}
    &\|\nabla b^{(1)}_t(\cdot,y)(x)\|+\|\nabla^2b^{(1)}_t(\cdot,y)(x)\|\leq K_b,\ \ |b^{(1)}_t(0,0)|\leq K_b,\\
    & |b^{(1)}_t(x,y)-b^{(1)}_t(x,\tilde{y})|\leq K_b|y-\tilde{y}|,\ \ t\in[0,T], x,y,\tilde{y}\in\R^d.
    \end{align*}
\end{enumerate}
Under {\bf(A1)}-{\bf(A2)}, \eqref{MVD} and \eqref{mfi} are well-posed. Throughout this section, let $\mu_t=\L_{X_t^i}$, which is independent of $i$ due to the weak uniqueness of \eqref{MVD}.
The following lemma is from \cite[Lemma 2.1]{HX23e}, which is useful in the proof of propagation of chaos. We list it here for readers' convenience.
\begin{lem}\label{CTY} Let $(V,\|\cdot\|_V)$ be a Banach space. $(Z_i)_{i\geq 1}$ are i.i.d. $V$-valued random variables with $\E\|Z_1\|^2_V<\infty$ and $h:V\times V\to \R$ is  measurable and of at most linear growth, i.e. there exists a constant $c>0$ such that
$$|h(v,\tilde{v})|\leq c(1+\|v\|_V+\|\tilde{v}\|_V),\ \ v,\tilde{v}\in V.$$
Then there exists a constant $\tilde{c}>0$ such that
\begin{align*}\E\left|\frac{1}{N}\sum_{m=1}^N h(Z_1,Z_m)-\int_{V} h(Z_1,y)\L_{Z_1}(\d y)\right|^2\leq \frac{\tilde{c}}{N}\E(1+\|Z_1\|_{V}^2).
\end{align*}
\end{lem}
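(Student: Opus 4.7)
The plan is to center the summand with respect to the conditional distribution of the second argument and then run a variance computation after freezing $Z_1$. Set
\[
H(v,\tilde v):=h(v,\tilde v)-\int_V h(v,y)\,\L_{Z_1}(\d y),
\]
so that by construction $\int_V H(v,y)\,\L_{Z_1}(\d y)=0$ for every $v\in V$, and the linear growth hypothesis on $h$ yields $|H(v,\tilde v)|\le C(1+\|v\|_V+\|\tilde v\|_V)$ for some constant $C$ depending only on $c$. The quantity to be estimated becomes $\E\bigl|\tfrac{1}{N}\sum_{m=1}^N H(Z_1,Z_m)\bigr|^2$.

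First I would peel off the diagonal term $m=1$. By the linear growth bound and the moment assumption $\E\|Z_1\|_V^2<\infty$, one has $\E|H(Z_1,Z_1)|^2\le C_1\E(1+\|Z_1\|_V^2)$, so this contribution is at most $\tfrac{C_1}{N^2}\E(1+\|Z_1\|_V^2)$, which is majorized by the desired right hand side and can be absorbed into the final constant. For the remaining tail $\tfrac{1}{N}\sum_{m=2}^N H(Z_1,Z_m)$ I would condition on $Z_1$: because $(Z_m)_{m\ge 2}$ are i.i.d., independent of $Z_1$, with common law $\L_{Z_1}$, we obtain
\[
\E[H(Z_1,Z_m)\mid Z_1]=\int_V H(Z_1,y)\,\L_{Z_1}(\d y)=0,\qquad m\ge 2,
\]
and for $m\ne m'$ in $\{2,\dots,N\}$ the conditional independence factorisation gives $\E[H(Z_1,Z_m)H(Z_1,Z_{m'})\mid Z_1]=\E[H(Z_1,Z_m)\mid Z_1]\,\E[H(Z_1,Z_{m'})\mid Z_1]=0$.

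Taking unconditional expectations, every off-diagonal cross term vanishes, and one is left with $\tfrac{1}{N^2}\sum_{m=2}^N\E|H(Z_1,Z_m)|^2$, each summand being bounded by $C'\E(1+\|Z_1\|_V^2+\|Z_m\|_V^2)=C'\E(1+2\|Z_1\|_V^2)$ thanks to the common law. Summing $N-1$ such contributions and combining with the isolated $m=1$ piece via $(a+b)^2\le 2a^2+2b^2$ yields the claimed bound $\tfrac{\tilde c}{N}\E(1+\|Z_1\|_V^2)$. I do not anticipate a genuine obstacle here: the estimate is essentially a $U$-statistic style variance computation, and the only book-keeping point worth flagging is that the $m=1$ term must be extracted before conditioning, since $Z_1$ is not independent of itself and the zero conditional mean property exploited for $m\ge 2$ is not available on the diagonal.
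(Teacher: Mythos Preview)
Your argument is correct: this is exactly the standard $U$-statistic variance computation---split off the diagonal $m=1$, condition on $Z_1$, use conditional independence of $(Z_m)_{m\ge 2}$ to annihilate the off-diagonal cross terms, and sum the surviving $N-1$ diagonal contributions. The paper does not give its own proof of this lemma; it simply quotes the statement from \cite[Lemma~2.1]{HX23e}, so there is nothing to compare against beyond noting that your route is the natural one.

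One minor point of bookkeeping: the pointwise bound $|H(v,\tilde v)|\le C(1+\|v\|_V+\|\tilde v\|_V)$ with $C$ depending only on $c$ is not quite accurate, since subtracting $\int_V h(v,y)\,\L_{Z_1}(\d y)$ introduces the quantity $\E\|Z_1\|_V$ into the constant. This does not matter for the conclusion, because when you pass to $\E|H(Z_1,Z_m)|^2$ that extra moment gets absorbed into the factor $\E(1+\|Z_1\|_V^2)$ on the right-hand side, and the final $\tilde c$ does depend only on $c$; just be careful not to overstate the intermediate pointwise estimate.
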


\begin{thm}\label{POC10}
Assume {\bf(A1)}-{\bf(A2)}, $\L_{X_0^{1,N}}\in\scr P_{1}(\R^{d})$ and $\L_{X_0^{1}}\in \scr P_2(\R^{d})$. Then the following assertions hold.

(1) There exists a constant $C>0$ depending on $T$ such that
\begin{equation}\begin{split}\label{S1}
&\sum_{i=1}^N\E\sup_{t\in[0,T]}|X^{i,N}_t-X^i_t|\leq C\sum_{i=1}^{N}\E|X_0^{i,N}-X_0^{i}|+C\sqrt{N}(1+\{\E|X_0^1|^2\}^{\frac{1}{2}}).
\end{split}\end{equation}
(2) There exists a constant $c>0$ depending on $T$ such that for any $\eta\in(0,1]$, $\mu_0\in \scr P_2(\R^d)$ and exchangeable $\mu_0^N\in \scr P_1((\R^d)^N)$,
\begin{align}\label{CMY}\nonumber&\tilde{\W}_\eta((P_t^{[k],N})^\ast\mu_0^N,(P_t^\ast\mu_0)^{\otimes k})\\
&\leq \frac{k}{N}\min\left\{\left(\frac{c}{\eta}+c t^{\frac{-1+\eta}{2}}\right)\tilde{\W}_1(\mu_0^N,\mu_0^{\otimes N}), \quad \frac{c}{\eta}\tilde{\W}_1(\mu_0^N,\mu_0^{\otimes N})+c\tilde{\W}_\eta(\mu_0^N,\mu_0^{\otimes N})\right\}\\
\nonumber&+\frac{c}{\eta}\frac{k}{\sqrt{N}}(1+\{\mu_0(|\cdot|^2)\}^{\frac{1}{2}},\ \ t\in(0,T], 1\leq k\leq N.
\end{align}
\end{thm}

\begin{proof}
(1) Firstly, it is standard to derive from {\bf(A1)}-{\bf(A2)} that
\begin{align}\label{mmo} \E\left[1+\sup_{t\in[0,T]}|X^{1}_t|^2\right]<c_{1,T}\left(1+\E|X^1_0|^2\right)
\end{align}
and
\begin{align}\label{gro} \E\left[1+\sup_{t\in[0,T]}|X^{1,N}_t|\right]<c_{1,T}\left(1+\E|X^{1,N}_0|\right)
\end{align}
for some constant $c_{1,T}>0$.
 Combining \eqref{MVD} and \eqref{mfi}, we conclude
\begin{align}\label{GYG01}
\nonumber&\sum_{i=1}^N\E\sup_{t\in[0,s]}|X^{i,N}_t-X^{i}_t|\\
&\leq \sum_{i=1}^N\E|X^{i,N}_0-X^{i}_0|+\sum_{i=1}^N\E\int_{0}^s\left|\frac{1}{N}\sum_{m=1}^Nb_r^{(1)}(X_r^{i,N}, X_r^{m,N})-\int_{\R^d}b_r^{(1)}(X_r^{i}, y)\mu_r(\d y)\right|\d r\\
\nonumber&+\sum_{i=1}^N\E\sup_{t\in[0,s]}\left|\int_0^t\left(\frac{1}{N}\sum_{m=1}^N\tilde{\sigma}_r(X_r^{i,N}, X_r^{m,N})-\int_{\R^d}\tilde{\sigma}_r(X_r^{i}, y)\mu_r(\d y)\right)\d W_r^i\right|\\
\nonumber&=:\sum_{i=1}^N\E|X^{i,N}_0-X^{i}_0|+I_1+I_2,\ \ s\in[0,T].
\end{align}
Firstly, it follows from Lemma \ref{CTY}, \eqref{mmo} and {\bf (A2)} that there exists a constant $c_1>0$ depending on $T$ such that
\begin{align}\label{b-b}
\nonumber I_1&\leq \sum_{i=1}^N\E\int_{0}^s\left|\frac{1}{N}\sum_{m=1}^Nb_r^{(1)}(X_r^{i,N}, X_r^{m,N})-\frac{1}{N}\sum_{m=1}^Nb_r^{(1)}(X_r^{i}, X_r^{m})\right|\d r\\
&+\sum_{i=1}^N\E\int_{0}^s\left|\frac{1}{N}\sum_{m=1}^Nb_r^{(1)}(X_r^{i}, X_r^{m})-\int_{\R^d}b_r^{(1)}(X_r^{i}, y)\mu_r(\d y)\right|\d r\\
\nonumber&\leq c_1\int_0^s\sum_{i=1}^N\E|X_r^{i,N}-X_r^i|\d r+c_1\sqrt{N}\{1+(\E|X_0^{1}|^2)^{\frac{1}{2}}\},\ \ s\in[0,T].
\end{align}
Next, the BDG inequality, Lemma \ref{CTY}, \eqref{mmo} and {\bf (A1)} imply
\begin{align}\label{GYG02}
\nonumber&I_2\leq 3\sum_{i=1}^N\E\left(\int_0^s\left\|\frac{1}{N}\sum_{m=1}^N\tilde{\sigma}_r(X_r^{i,N}, X_r^{m,N})-\int_{\R^d}\tilde{\sigma}_r(X_r^{i}, y)\mu_r(\d y)\right\|_{HS}^2\d r\right)^{\frac{1}{2}}\\
\nonumber&\leq 3\sqrt{2}\sum_{i=1}^N\E\left(\int_0^s\left\|\frac{1}{N}\sum_{m=1}^N\tilde{\sigma}_r(X_r^{i,N}, X_r^{m,N})-\frac{1}{N}\sum_{m=1}^N\tilde{\sigma}_r(X_r^{i}, X_r^{m})\right\|_{HS}^2\d r\right)^{\frac{1}{2}}\\
&+3\sqrt{2}\sum_{i=1}^N\E\left(\int_0^s\left\|\frac{1}{N}\sum_{m=1}^N\tilde{\sigma}_r(X_r^{i}, X_r^{m})-\int_{\R^d}\tilde{\sigma}_r(X_r^{i}, y)\mu_r(\d y)\right\|_{HS}^2\d r\right)^{\frac{1}{2}}\\
\nonumber&\leq \frac{1}{4}\sum_{i=1}^N\E\sup_{r\in[0,s]}\left(\frac{1}{N}\sum_{i=1}^N|X_r^{m,N}-X_r^m|+|X_r^{i,N}-X_r^i|\right)\\ \nonumber&+c_2\int_0^s\sum_{i=1}^N\E|X_r^{i,N}-X_r^i|\d r+c_2\sqrt{N}\{1+(\E|X_0^{1}|^2)^{\frac{1}{2}}\},\ \ s\in[0,T]
\end{align}
for some constant $c_2>0$ depending on $T$.
Combining \eqref{GYG01}-\eqref{GYG02}, we find a constant $c_{3}>0$ depending on $T$ such that
\begin{align}\label{GYG}
\sum_{i=1}^N\E\sup_{t\in[0,s]}|X^{i,N}_t-X^{i}_t|&\leq 2\sum_{i=1}^N\E|X^{i,N}_0-X^{i}_0|+c_{3}\int_{0}^s\sum_{i=1}^N\E\sup_{t\in[0,r]}|X^{i,N}_t-X^{i}_t|\d r\\
\nonumber&+c_3\sqrt{N}\{1+(\E|X_0^{1}|^2)^{\frac{1}{2}}\}
,\ \ s\in[0,T].
\end{align}
Then \eqref{S1} follows from \eqref{GYG}, \eqref{mmo}, \eqref{gro} and Gr\"{o}nwall's inequality.

(2)
For $0\leq s\leq t\leq T$, consider the decoupled SDE $$\d X_{s,t}^{\mu,x}=b_t(X_{s,t}^{\mu,x},\mu_t)\d t+\sigma_t(X_{s,t}^{\mu,x},\mu_t)\d W_t^1,\ \ X_{s,s}^{\mu,x}=x\in\R^d.$$
Let
$$P_{s,t}^\mu f(x):=\E f(X_{s,t}^{\mu,x}), \ \ f\in \scr B_b(\R^{d}),x\in\R^d,$$
and for any $x=(x^1,x^2,\cdots,x^N)\in (\R^{d})^N, F\in \scr B_b((\R^{d})^N)$,
\begin{align*}(P_{s,t}^\mu)^{\otimes N} F(x)&:=\int_{(\R^{d})^N}F(y^1,y^2,\cdots,y^N)\prod_{i=1}^N\L_{X_{s,t}^{\mu,x^i}}(\d y^i).
\end{align*}
For simplicity, we write $P_{t}^\mu =P_{0,t}^\mu $. Take $(X_0^{i,N})_{1\leq i\leq N}$ and $(X_0^{i})_{1\leq i\leq N}$ satisfying $\L_{(X_0^{i,N})_{1\leq i\leq N}}=\mu_0^N$ and $\L_{(X_0^{i})_{1\leq i\leq N}}=\mu_0^{\otimes N}$. Consider
\begin{equation}\label{eq4}
\d \bar{X}_t^{i}=b_t(\bar{X}_t^{i},
\mu_t)\d t+\si_t(\bar{X}_t^{i},\mu_t)\d
W_t^i,\ \ \bar{X}_0^i=X_0^{i,N}, 1\leq i\leq N.
\end{equation}
In the following, we divide the proof into two steps.

{\bf Step 1. Estimate $\tilde{\W}_{\eta}((P_t^{[N],N})^\ast\mu_0^N,\L_{(\bar{X}_t^{i})_{1\leq i\leq N}})$.}

Firstly, we claim that there exists a constant $C_0>0$ depending on $T$ such that for any $\eta\in(0,1]$,
\begin{align}\label{PTf}
&\tilde{\W}_{\eta}((P_t^{[N],N})^\ast\mu_0^N,\L_{(\bar{X}_t^{i})_{1\leq i\leq N}})\leq \frac{C_0}{\eta}\left\{\tilde{\W}_1(\mu_0^N,\mu_0^{\otimes N})+\sqrt{N}(1+\{\E|X_0^1|^2\}^{\frac{1}{2}})\right\} .
    \end{align}
For any $t\in[0,T], x=(x^1,x^2,\cdots,x^N)\in (\R^{d})^N$, let $$B_t(x)=\left(\frac{1}{N}\sum_{m=1}^Nb_t^{(1)}(x^i,x^m)\right)_{1\leq i\leq N},\ \ \Sigma_t(x)=\mathrm{diag}\left(\left(\frac{1}{N}\sum_{m=1}^N\tilde{\sigma}_t(x^i,x^m)\right)_{1\leq i\leq N}\right),$$
and
$$B_t^\mu(x)=\left(\int_{\R^d}b_t^{(1)}(x^i,y)\mu_t(\d y)\right)_{1\leq i\leq N},\ \ \Sigma_t^\mu(x)=\mathrm{diag}\left(\left(\int_{\R^d}\tilde{\sigma}_t(x^i,y)\mu_t(\d y)\right)_{1\leq i\leq N}\right).$$
We denote
$$\scr L_t^{\mu,N}=\<B_t^\mu,\nabla\>+\frac{1}{2}\mathrm{tr}(\Sigma_t^\mu(\Sigma_t^\mu)^\ast\nabla^2),\ \ \scr L_t^{N}=\<B_t,\nabla\>+\frac{1}{2}\mathrm{tr}(\Sigma_t(\Sigma_t)^\ast\nabla^2), \ \ t\in[0,T].$$
By {\bf (A1)}-{\bf (A2)}, it is well-known that $P_{s,t}^\mu C_b^2(\R^d)\subset C_b^2(\R^d)$ and there exists a constant $c>0$ depending on $T$ such that
\begin{align}\label{gre}
|\nabla^{j}P_{s,t}^\mu f|\leq c(t-s)^{-\frac{j}{2}}\{P_{s,t}^\mu |f|^2\}^{\frac{1}{2}},\ \ 0\leq s<t\leq T, f\in C_b^2(\R^d),j=1,2,
\end{align}
see for instance \cite[Proof of Lemma 2.1]{Wang16} and \cite{Nbook}. Hence, it follows from It\^{o}'s formula that the backward Kolmogorov equations holds:
\begin{align}\label{bke}
\frac{\d \{(P_{s,t}^\mu)^{\otimes N} F\}}{\d s}=-\scr L^{\mu,N}_s\{(P_{s,t}^\mu)^{\otimes N} F\},\ \ F\in C_b^2((\R^d)^N),\ \ 0\leq s\leq t\leq T.
\end{align}
Fix $t\in(0,T]$. Then \eqref{bke} together with It\^{o}'s formula implies that for any $F\in C_b^2((\R^d)^N)$,
\begin{align}\label{itf}
\nonumber&\d \{(P_{s,t}^\mu)^{\otimes N} F\}(X_s^{1,N},X_s^{2,N},\cdots,X_s^{N,N})\\
&=(\scr L_s^{N}-\scr L^{\mu,N}_s)\{(P_{s,t}^\mu)^{\otimes N} F\}(X_s^{1,N},X_s^{2,N},\cdots,X_s^{N,N})\d s+\d M_s,\ \ s\in[0,t]
\end{align}
holds for some martingale $\{M_s\}_{s\in[0,t]}$.
So, \eqref{itf} combined with the fact
$$\E[\{(P_{t}^\mu)^{\otimes N} F\}(X_0^{1,N},X_0^{2,N},\cdots,X_0^{N,N})]=\E F(\bar{X}_t^{1},\bar{X}_t^{2},\cdots,\bar{X}_t^{N}), \ \ F\in C_b^2((\R^d)^N)$$
implies
\begin{align}\label{DUH}
\nonumber&\E F(X_t^{1,N},X_t^{2,N},\cdots,X_t^{N,N})-\E F(\bar{X}_t^{1},\bar{X}_t^{2},\cdots,\bar{X}_t^{N})\\
&=\E\int_0^t(\scr L_s^{N}-\scr L^{\mu,N}_s)\{(P_{s,t}^\mu)^{\otimes N} F\}(X_s^{1,N},X_s^{2,N},\cdots,X_s^{N,N})\d s \\
\nonumber&=R_{0,t}^{b}F+R_{0,t}^{\sigma}F, \ \ F\in C_b^2((\R^d)^N)
\end{align}
    with
\begin{align*}
    \nonumber&R_{0,t}^{b}F:=\E\int_0^t \left\{\<B_s-B_s^\mu,\nabla(P^\mu_{s,t})^{\otimes N} F\>\right\}(X_s^{1,N},X_s^{2,N},\cdots,X_s^{N,N})\d s\\
  &=\int_0^t \E\sum_{i=1}^N\Bigg\<\frac{1}{N}\sum_{m=1}^Nb_s^{(1)}(X_{s}^{i,N},X_{s}^{m,N})-\int_{\R^d} b_s^{(1)}(X_{s}^{i,N},y)\mu_s(\d y), \\
\nonumber &\qquad\qquad\qquad\qquad\quad[\nabla_{i}(P^\mu_{s,t})^{\otimes N}F](X_{s}^{1,N},X_{s}^{2,N},\cdots,X_{s}^{N,N})\Bigg\>\d s,
   \end{align*}
   and
 \begin{align*}
    \nonumber&R_{0,t}^{\sigma}F=\frac{1}{2}\int_0^t \E\mathrm{tr}\left\{[\Sigma_s\Sigma^\ast_s-\Sigma_s^\mu(\Sigma^\mu_s)^\ast]\nabla^2 (P^{\mu}_{s,t})^{\otimes N} F\right\}(X_s^{1,N},X_s^{2,N},\cdots,X_s^{N,N})\d s\\
    &=\frac{1}{2}\int_0^t \E\sum_{i=1}^N\mathrm{tr}\Bigg \{\bigg[\bigg(\frac{1}{N}\sum_{m=1}^N\tilde{\sigma}_s(X_{s}^{i,N},X_{s}^{m,N}) \frac{1}{N}\sum_{m=1}^N\tilde{\sigma}_s^\ast(X_{s}^{i,N},X_{s}^{m,N})\bigg)\\
   \nonumber &-\int_{\R^d}\tilde{\sigma}_s(X_{s}^{i,N},y)\mu_s(\d y)\int_{\R^d}\tilde{\sigma}_s^\ast(X_{s}^{i,N},y)\mu_s(\d y)\bigg][\nabla^2_{i}(P^\mu_{s,t})^{\otimes N}F](X_{s}^{1,N},X_{s}^{2,N},\cdots,X_{s}^{N,N})\Bigg\}\d s.
\end{align*}
Consider
$$\ff{\d}{\d t} \theta_{s,t}(x)=b_t(\theta_{s,t}(x),\mu_t),\ \ \ t\in [s,T], \theta_{s,s}(x)=x.$$
By  It\^o's formula, {\bf (A1)}-{\bf (A2)}, we find a constant $\tilde{c}_1>0$ and a martingale $\tilde{M}_t$  such that
\beg{align*} &\d |X_{s,t}^{\mu,x}-\theta_{s,t}(x)|^2\\
 &= \Big\{2 \big\<X_{s,t}^{\mu,x}-\theta_{s,t}(x),  b_t(X_{s,t}^{\mu,x},\mu_t) -b_t(\theta_{s,t}(x),\mu_t)\big\> + \| \si_t (X_{s,t}^{\mu,x},\mu_t)\|_{HS}^2 \Big\}\d t + \d \tilde{M}_t\\
&\le \tilde{c}_1 \big\{|X_{s,t}^{\mu,x}-\theta_{s,t}(x)|^2 +1\Big\}\d t+\d \tilde{M}_t,\ \ \ t\in [s,T], |X_{s,s}^{\mu,x}-\theta_{s,s}(x)|=0.\end{align*}
So, we have
$$\E\big[|X_{s,t}^{\mu,x}-\theta_{s,t}(x)|^2\big]\le \tilde{c}_1 \e^{\tilde{c}_1 T}(t-s),\ \ \ 0\le s\le t\le T.$$
Combining this with \eqref{gre}, we conclude that for any $f\in C_b^2(\R^d)$ with $[f]_\eta<1$,
\begin{align}\label{gra}
\nonumber|\nabla^{j}P_{s,t}^\mu f|(x)&=|\nabla^{j}P_{s,t}^\mu (f-f(\theta_{s,t}(x)))|(x)\leq c(t-s)^{-\frac{j}{2}}\{P_{s,t}^\mu |f-f(\theta_{s,t}(x))|^2(x)\}^{\frac{1}{2}}\\
&\leq c(t-s)^{-\frac{j}{2}}\{\E|X_{s,t}^{\mu,x}-\theta_{s,t}(x)|^2\}^{\frac{\eta}{2}}\\
\nonumber&\leq c_0(t-s)^{\frac{-j+\eta}{2}},\ \ 0\leq s<t\leq T,j=1,2
\end{align}
holds for some constant $c_0>0$ depending on $T$.

Let $g\in C_b^2((\R^d)^N)$ with $[g]_{1,\eta}\leq 1$ for $[\cdot]_{1,\eta}$ defined in \eqref{1et}. For any $1\leq i\leq N$, $(y^1,y^2,\cdots,y^{i-1},y^{i+1},\cdots, y^N)\in (\R^{d})^{ (N-1)}$, define
\begin{align*}
&[\scr M_{s,t}^{i,\mu,(y^1,y^2,\cdots,y^{i-1},y^{i+1},\cdots, y^N)}(g)](z)\\
&=\int_{(\R^{d})^{ (N-1)}}g(z^1,z^2,\cdots, z^{i-1},z,z^{i+1},\cdots,z^N)\prod_{m=1,m\neq i}^N\L_{X_{s,t}^{\mu,y^m}}(\d z^m),\ \  z\in\R^d.
\end{align*}
It is not difficult to see that for any $1\leq i\leq N$, $(y^1,y^2,\cdots,y^{i-1},y^{i+1},\cdots, y^N)\in (\R^{d})^{ (N-1)}$,
\begin{align*}&\left|[\scr M_{s,t}^{i,\mu,(y^1,y^2,\cdots,y^{i-1},y^{i+1},\cdots, y^N)}(g)](z)-[\scr M_{s,t}^{i,\mu,(y^1,y^2,\cdots,y^{i-1},y^{i+1},\cdots, y^N)}(g)](\tilde{z})\right|\\
&\qquad\quad\leq |z-\tilde{z}|^\eta,\ \ z,\tilde{z}\in\R^d.
\end{align*}
This together with \eqref{gra} implies that
\begin{align}\label{kb2}|\nabla^j_{i}(P^\mu_{s,t})^{\otimes N}g|(y^1,y^2,\cdots,y^N)&=\left|\nabla^j_{i}\{P_{s,t}^\mu[\scr M_{s,t}^{i,\mu,(y^1,y^2,\cdots,y^{i-1},y^{i+1},\cdots, y^N)}(g)]\}(y^i)\right|\\
\nonumber&\leq c_0(t-s)^{\frac{-j+\eta}{2}},\ \ j=1,2, 1\leq i\leq N.
\end{align}
Substituting \eqref{kb2} into \eqref{DUH} by replacing $F$ with $g$ and combining the fact $\|\sigma\|\leq \sqrt{\delta}$ due to {\bf(A1)}, we arrive at
\begin{align}\label{DUE}
\nonumber&|\E g((X_t^{i,N})_{1\leq i\leq N})-\E g((\bar{X}_t^i)_{1\leq i\leq N})|\\
\nonumber&\leq c_0\int_0^t \sum_{i=1}^N\E\left|\frac{1}{N}\sum_{m=1}^Nb_r^{(1)}(X_r^{i,N},X_r^{m,N})-\int_{\R^d}b_r^{(1)}(X_r^{i,N},y)\mu_r(\d y)\right|(t-r)^{\frac{-1+\eta}{2}}\d r\\
&+c_0\int_0^t \sum_{i=1}^N\E\Bigg\|\frac{1}{N^2}\sum_{m=1}^N \sum_{l=1}^N\tilde{\sigma}_r(X_r^{i,N},X_r^{m,N})\tilde{\sigma}_r^\ast(X_r^{i,N},X_r^{l,N}) \\
\nonumber&\qquad\qquad-\int_{\R^d}\tilde{\sigma}_r(X_r^{i,N},y)\mu_r(\d y)\int_{\R^d}\tilde{\sigma}_r^\ast(X_r^{i,N},y)\mu_r(\d y)
    \Bigg\|_{HS}\times(t-r)^{\frac{-2+\eta}{2}}\d r\\
\nonumber&\leq c_0\int_0^t \sum_{i=1}^N\E\left|\frac{1}{N}\sum_{m=1}^Nb_r^{(1)}(X_r^{i,N},X_r^{m,N})-\int_{\R^d}b_r^{(1)}(X_r^{i,N},y)\mu_r(\d y)\right|(t-r)^{\frac{-1+\eta}{2}}\d r\\
\nonumber&+2c_0\sqrt{\delta}\int_0^t \sum_{i=1}^N\E\Bigg\|\frac{1}{N}\sum_{m=1}^N\tilde{\sigma}_r(X_r^{i,N},X_r^{m,N}) -\int_{\R^d}\tilde{\sigma}_r(X_r^{i,N},y)\mu_r(\d y)
    \Bigg\|_{HS}(t-r)^{\frac{-2+\eta}{2}}\d r.
\end{align}
By {\bf (A2)}, \eqref{mmo}, \eqref{S1} and Lemma \ref{CTY}, we can find a constant $C>0$ depending on $T$ such that
\begin{align}\label{AA1}
\nonumber&\sum_{i=1}^N\E\left|\frac{1}{N}\sum_{m=1}^N  b_r^{(1)}(X_r^{i,N},X_r^{m,N})-\int_{\R^d} b_r^{(1)}(X_r^{i,N},y)\mu_r(\d y)\right|\\
\nonumber&\leq\sum_{i=1}^N \E\bigg|\frac{1}{N}\sum_{m=1}^N b_r^{(1)}(X_r^{i,N},X_r^{m,N})-\int_{\R^d} b_r^{(1)}(X_r^{i,N},y)\mu_r(\d y)\\
&\qquad\qquad\quad-\bigg(\frac{1}{N}\sum_{m=1}^N  b_r^{(1)}(X_r^{i},X_r^{m})-\int_{\R^d} b_r^{(1)}(X_r^{i},y)\mu_r(\d y)\bigg)\bigg|\\
\nonumber&+\sum_{i=1}^N\E\left|\frac{1}{N}\sum_{m=1}^N  b_r^{(1)}(X_r^{i},X_r^{m})-\int_{\R^d} b_r^{(1)}(X_r^{i},y)\mu_r(\d y)\right|\\
\nonumber&\leq 3K_b\sum_{i=1}^N\E|X^{i,N}_r-X^{i}_r|+c\sqrt{N}\{1+(\E|X_0^{1}|^2)^{\frac{1}{2}}\}\\
\nonumber&\leq C\sum_{i=1}^{N}\E|X_0^{i,N}-X_0^{i}|+C\sqrt{N}(1+\{\E|X_0^1|^2\}^{\frac{1}{2}}).
\end{align}
Similarly, {\bf(A1)}, \eqref{mmo}, \eqref{S1} and Lemma \ref{CTY} imply
\begin{align}\label{AA2}
\nonumber&\sum_{i=1}^N\E\left\|\frac{1}{N}\sum_{m=1}^N \tilde{\sigma}_r(X_r^{i,N},X_r^{m,N})-\int_{\R^d}\tilde{\sigma}_r(X_r^{i,N},y)\mu_r(\d y)\right\|_{HS}\\
&\leq \tilde{C}\sum_{i=1}^{N}\E|X_0^{i,N}-X_0^{i}|+\tilde{C}\sqrt{N}(1+\{\E|X_0^1|^2\}^{\frac{1}{2}})
\end{align}
foe some constant $\tilde{C}>0$ depending on $T$.
Substituting \eqref{AA1}-\eqref{AA2} into \eqref{DUE}, we may find a constant $C_1>0$ such that for any $g\in C_b^2((\R^d)^N)$ with $[g]_{1,\eta}\leq 1$,
\begin{align*}
&|\E g((X_t^{i,N})_{1\leq i\leq N})-\E g((\bar{X}_t^i)_{1\leq i\leq N})|\\
\nonumber&\leq \left\{C_1\sum_{i=1}^{N}\E|X_0^{i,N}-X_0^{i}|+C_1\sqrt{N}(1+\{\E|X_0^1|^2\}^{\frac{1}{2}})\right\}\\
&\quad\quad\times \left\{\int_0^t (t-r)^{\frac{-1+\eta}{2}}\d r+\int_0^t (t-r)^{\frac{-2+\eta}{2}}\d r\right\}\\
&\leq \left\{2C_1\sum_{i=1}^{N}\E|X_0^{i,N}-X_0^{i}|+2C_1\sqrt{N}(1+\{\E|X_0^1|^2\}^{\frac{1}{2}}) \right\} \left\{t^{\frac{1+\eta}{2}}+\frac{t^{\frac{\eta}{2}}}{\eta}\right\}.
\end{align*}
By taking infinimum with respect to $(X_0^{i,N},X_0^i)_{1\leq i\leq N}$ satisfying $\L_{(X_0^{i,N})_{1\leq i\leq N}}=\mu_0^N$ and $\L_{(X_0^{i})_{1\leq i\leq N}}=\mu_0^{\otimes N}$, we derive \eqref{PTf} from \eqref{dfa1}.

{\bf Step 2. Estimate $\tilde{\W}_{\eta}(\L_{(\bar{X}_t^{i})_{1\leq i\leq N}},(P_t^\ast\mu_0)^{\otimes N})$.}

Define $\P^{0}:= \P(\ \cdot\ |\F_0),\ \ \E^{0}:= \E(\ \cdot\ | \F_0)$ and let $\L_{\xi|\P^0}$ denote the conditional distribution of a random variable $\xi$ with respect to $\F_0$.
By \eqref{gra} and \eqref{eq4}, we derive
\begin{align}\label{tri}\W_\eta(\L_{\bar{X}_t^i|\P^0},\L_{X_t^i|\P^0})\leq c_0 t^{\frac{-1+\eta}{2}}|X_0^{i,N}-X_0^{i}|,\ \ 1\leq i\leq N.
\end{align}
On the other hand, {\bf(A1)}-{\bf (A2)} imply
$$\E(|X_t^i-\bar{X}_t^i|^2|\F_0)\leq \tilde{c}|X_0^i-X_0^{i,N}|^2,\ \ t\in[0,T], 1\leq i\leq N$$
for some constant $\tilde{c}$ depending on $T$. Jensen's inequality gives
\begin{align}\label{efa}\W_\eta(\L_{\bar{X}_t^i|\P^0},\L_{X_t^i|\P^0})\leq \E(|X_t^i-\bar{X}_t^i|^\eta|\F_0)\leq \tilde{c}^{\frac{\eta}{2}}|X_0^i-X_0^{i,N}|^\eta,\ \ t\in[0,T], 1\leq i\leq N.
\end{align}
Since both $(\bar{X}_t^i)_{1\leq i\leq N}$ and $(X_t^i)_{1\leq i\leq N}$ are independent under $\P^0$, we have
\begin{align*}\tilde{\W}_\eta(\L_{(\bar{X}_t^i)_{1\leq i\leq N}|\P^0},\L_{(X_t^i)_{1\leq i\leq N}|\P^0}) \leq \sum_{i=1}^N\W_\eta(\L_{\bar{X}_t^i|\P^0},\L_{X_t^i|\P^0}).
\end{align*}
This together with \eqref{tri} and \eqref{efa} implies
\begin{align*}&\tilde{\W}_\eta(\L_{(\bar{X}_t^i)_{1\leq i\leq N}},(P_t^\ast\mu_0)^{\otimes N})\\
&\leq \E\{\tilde{\W}_\eta(\L_{(\bar{X}_t^i)_{1\leq i\leq N}|\P^0},\L_{(X_t^i)_{1\leq i\leq N}|\P^0})\} \\
&\leq \min\left\{c_0 t^{\frac{-1+\eta}{2}}\sum_{i=1}^N\E|X_0^{i,N}-X_0^{i}|, \quad \tilde{c}^{\frac{\eta}{2}}\sum_{i=1}^N\E|X_0^{i,N}-X_0^{i}|^\eta\right\}.
\end{align*}
By taking infinimum with respect to $(X_0^{i,N},X_0^i)_{1\leq i\leq N}$ satisfying $\L_{(X_0^{i,N})_{1\leq i\leq N}}=\mu_0^N$ and $\L_{(X_0^{i})_{1\leq i\leq N}}=\mu_0^{\otimes N}$, we get
\begin{align}\label{tri13}\tilde{\W}_\eta(\L_{(\bar{X}_t^i)_{1\leq i\leq N}},(P_t^\ast\mu_0)^{\otimes N})\leq \min\left\{c_0 t^{\frac{-1+\eta}{2}}\tilde{\W}_1(\mu_0^N,\mu_0^{\otimes N}), \quad \tilde{c}^{\frac{\eta}{2}}\tilde{\W}_\eta(\mu_0^N,\mu_0^{\otimes N})\right\}.
\end{align}
Finally, by \eqref{PTf}, \eqref{tri13} and the triangle inequality, we obtain
\begin{align*}
&\tilde{\W}_{\eta}((P_t^{[N],N})^\ast\mu_0^N,(P_t^\ast\mu_0)^{\otimes N})\\
&\leq \frac{C_0}{\eta}\left\{\tilde{\W}_1(\mu_0^N,\mu_0^{\otimes N})+\sqrt{N}(1+\{\E|X_0^1|^2\}^{\frac{1}{2}})\right\}\\
&+\min\left\{c_0 t^{\frac{-1+\eta}{2}}\tilde{\W}_1(\mu_0^N,\mu_0^{\otimes N}), \quad \tilde{c}^{\frac{\eta}{2}}\tilde{\W}_\eta(\mu_0^N,\mu_0^{\otimes N})\right\}\\
&\leq \min\left\{\left(\frac{C_0}{\eta}+c_0 t^{\frac{-1+\eta}{2}}\right)\tilde{\W}_1(\mu_0^N,\mu_0^{\otimes N}), \quad \frac{C_0}{\eta}\tilde{\W}_1(\mu_0^N,\mu_0^{\otimes N})+\tilde{c}^{\frac{\eta}{2}}\tilde{\W}_\eta(\mu_0^N,\mu_0^{\otimes N})\right\}\\
&+\frac{C_0}{\eta}\sqrt{N}(1+\{\E|X_0^1|^2\}^{\frac{1}{2}}.
\end{align*}
This yields \eqref{CMY} in view of
\begin{align*}\tilde{\W}_\eta((P_t^{[k],N})^\ast\mu_0^N,(P_t^\ast\mu_0)^{\otimes k})&\leq \frac{k}{N}\tilde{\W}_\eta((P_t^{[N],N})^\ast\mu_0^N,(P_t^\ast\mu_0)^{\otimes N}).
\end{align*}
Hence, the proof is completed.
\end{proof}
\section{Second order system}
In this section, we consider the second order mean field interacting system, i.e. the kinetic case. We first introduce some notations. For any $F\in C_b^1(\R^{2d})$, $x,y\in\R^{d}$, let $\nabla_x F(x,y)$ and $\nabla_y F(x,y)$ denote the gradient with respect to $x$ and $y$ respectively. We denote $\nabla^2_y=\nabla_y\nabla_y$ and $\nabla^1_y=\nabla_y$. For any $k\geq 1$, $F\in C_b^1((\R^{2d})^k)$, $1\leq i\leq k$, $z=(z^1,z^2,\cdots,z^k)$ with $z^i=(x^i,y^i)$ and $x^i,y^i\in\R^d$, let $(\nabla_{i,x} F(z^1,z^2,\cdots,z^k),\nabla_{i,y} F(z^1,z^2,\cdots,z^k))$ denote the gradient with respect to $(x^i,y^i)$. Denote $\nabla^2_{i,y}=\nabla_{i,y}\nabla_{i,y}$.

Let $T>0$, $\{(W_t^i)_{t\in[0,T]}\}_{i\geq 1}$ be independent $n$-dimensional Brownian motions on some complete filtration probability space $(\Omega, \scr F, (\scr F_t)_{t\in[0,T]},\P)$ and $(X_0^i,Y_0^i)_{i\geq 1}$ be i.i.d. $\F_0$-measurable and $\R^{2d}$-valued random variables. $b:[0,T]\times \R^{2d}\times\scr P(\R^{2d})\to\R^d$, $\sigma:[0,T]\times \R^{2d}\times\scr P(\R^{2d})\to\R^d\otimes\R^{n}$ are measurable and  bounded on bounded sets. Consider
\begin{equation*}
\begin{cases}
\d X^{i}_t=Y^{i}_t\d t, \\
\d Y_t^i=b_t(X_t^i,Y_t^i, \L_{(X_t^i,Y_t^i)})\d t+  \sigma_t(X_t^i, Y_t^i, \L_{(X_t^i,Y_t^i)})\d W^i_t,\ \ 1\leq i\leq N,
\end{cases}
\end{equation*}
and the mean field interacting particle system
\begin{equation*}
\begin{cases}
\d X^{i,N}_t=Y^{i,N}_t\d t, \\
\d Y_t^{i,N}=b_t(X_t^{i,N},Y_t^{i,N}, \hat\mu_t^N)\d t+  \sigma_t(X_t^{i,N}, Y_t^{i,N}, \hat\mu_t^N)\d W^i_t,\ \ 1\leq i\leq N,
\end{cases}
\end{equation*}
where the distribution of $(X_0^{i,N},Y_0^{i,N})_{1\leq i\leq N}$ is exchangeable and
\begin{align*}\hat\mu_t^N=\frac{1}{N}\sum_{i=1}^N\delta_{(X_t^{i,N},Y_t^{i,N})},\ \ 1\leq i\leq N.
\end{align*}
As in Section 2, we assume that $b_t(x,\mu)=\int_{\R^{2d}}b^{(1)}_t(x,y)\mu(\d y)$, $\sigma_t(x,\mu)=\int_{\R^{2d}}\tilde{\sigma}_t(x,y)\mu(\d y)$ for some measurable $b^{(1)}:[0,T]\times \R^{2d}\times\R^{2d}\to\R^d$ and $\tilde{\sigma}:[0,T]\times \R^{2d}\times\R^{2d}\to\R^d\otimes\R^n$.
The main result is the following theorem.
\begin{thm}\label{POC12}
Assume {\bf(A1)}-{\bf(A2)} with $\R^{2d}$ in place of $\R^d$ and $\L_{(X_0^{1,N},Y_0^{1,N})}\in\scr P_{1}(\R^{2d})$, $\L_{(X_0^{1},Y_0^{1})}\in \scr P_2(\R^{2d})$.
Then the following assertions hold.

(1) There exists a constant $C>0$ depending only on $T$ such that
\begin{equation}\begin{split}\label{S12}
\sum_{i=1}^N\E\sup_{t\in[0,T]}|(X^{i,N}_t,Y_t^{i,N})-(X^i_t,Y_t^i)|&\leq C\sum_{i=1}^{N}\E|(X^{i,N}_0,Y_0^{i,N})-(X^i_0,Y_0^i)|\\
&+C\sqrt{N}(1+\{\E|(X_0^1,Y_0^1)|^2\}^{\frac{1}{2}}).
\end{split}\end{equation}
(2) There exists a constant $c>0$ depending only on $T$ such that for any $\eta\in(0,1]$,
\begin{align}\label{CMY1}\nonumber&\tilde{\W}_\eta(\L_{(X_t^{i,N},Y_t^{i,N})_{1\leq i\leq k}},\L_{(X_t^i,Y_t^i)_{1\leq i\leq k}})\\
&\leq \frac{k}{N}\min\left\{\left(\frac{c}{\eta}+c t^{\frac{-3+\eta}{2}}\right)\tilde{\W}_1,\quad(\frac{c}{\eta} \tilde{\W}_1+c\tilde{\W}_\eta)\right\}(\L_{(X_0^{i,N},Y_0^{i,N})_{1\leq i\leq N}},\L_{(X_0^i,Y_0^i)_{1\leq i\leq N}})\\
\nonumber&+\frac{c}{\eta}\frac{k}{\sqrt{N}}(1+\{\E|(X_0^1,Y_0^1)|^2\}^{\frac{1}{2}}), \ \ t\in(0,T], 1\leq k\leq N.
\end{align}
\end{thm}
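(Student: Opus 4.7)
The plan is to extend the two-step strategy of Theorem~\ref{POC10} to the kinetic setting, writing $Z^{i,N}=(X^{i,N},Y^{i,N})$ and $Z^i=(X^i,Y^i)$ on $\R^{2d}$. The crucial new feature is that because the noise only enters the velocity component, the It\^o-comparison in Step~1 generates only $y$-derivatives of the decoupled semigroup (with integrable singularities), whereas in Step~2 the initial-condition perturbation in the $x$-direction carries the H\"ormander singularity $(t-s)^{-3/2}$. For part~(1), I would repeat the moment-Gr\"onwall scheme of part~(1) of Theorem~\ref{POC10}: since $X^{i,N}_t - X^i_t = \int_0^t (Y^{i,N}_s - Y^i_s)\,\d s$ contributes no stochastic integral, it is handled by plain Gr\"onwall on the $Y$-difference, which itself is estimated exactly as in the non-degenerate proof via Lemma~\ref{CTY}, the BDG inequality, \textbf{(A1)}--\textbf{(A2)} and exchangeability.

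For part~(2), introduce the decoupled kinetic semigroup $P_{s,t}^\mu$ on $\R^{2d}$ with generator $\L^{\mu}_t = \<y,\nabla_x\> + \<b_t(\cdot,\mu_t),\nabla_y\> + \tfrac{1}{2}\mathrm{tr}(\sigma_t\sigma_t^*(\cdot,\mu_t)\nabla_y^2)$, and the auxiliary system $\bar Z^i$ solving the McKean--Vlasov dynamics with $\bar Z^i_0 = Z^{i,N}_0$. In Step~1 I apply It\^o's formula to $(P_{s,t}^\mu)^{\otimes N}F$ along the interacting trajectory $(Z^{i,N})$. The transport part $\<y,\nabla_x\>$ is common to both generators and cancels, so the remainder involves only the differences $B-B^\mu$ paired with $\nabla_{i,y}(P_{s,t}^\mu)^{\otimes N}F$ and $\Sigma\Sigma^*-\Sigma^\mu(\Sigma^\mu)^*$ paired with $\nabla_{i,y}^2(P_{s,t}^\mu)^{\otimes N}F$. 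Combining the standard bound $|\nabla_y^j P_{s,t}^\mu f|\le c(t-s)^{-j/2}\{P_{s,t}^\mu|f|^2\}^{1/2}$ with the zero-noise flow $\theta_{s,t}(z)$ (for which $\E\|Z^{\mu,z}_{s,t}-\theta_{s,t}(z)\|^2\le c(t-s)$ follows from boundedness of $\sigma$ and the Lipschitz property of $b$) yields $|\nabla_{i,y}^j(P_{s,t}^\mu)^{\otimes N}g|\le c_0 (t-s)^{(-j+\eta)/2}$ for $[g]_{1,\eta}\le 1$, $j=1,2$. Together with Lemma~\ref{CTY}, part~(1) and integrability of $\int_0^t(t-r)^{(-2+\eta)/2}\d r$, this reproduces the analogue of \eqref{PTf} with no singular $t$-factor.

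In Step~2, I compare $\bar Z^i_t$ with $Z^i_t$ conditionally on $\F_0$; since both families are independent in $i$ under $\P^0$, it suffices to bound each marginal $\W_\eta(\L_{\bar Z^i_t|\P^0},\L_{Z^i_t|\P^0})$ by applying the gradient estimate of $P_t^\mu$ to the initial discrepancy $Z^{i,N}_0-Z^i_0$. Here I invoke the kinetic gradient bounds $|\nabla_x P_{s,t}^\mu f|\le c(t-s)^{-3/2}\{P_{s,t}^\mu|f|^2\}^{1/2}$ and $|\nabla_y P_{s,t}^\mu f|\le c(t-s)^{-1/2}\{P_{s,t}^\mu|f|^2\}^{1/2}$, standard under \textbf{(A1)}--\textbf{(A2)} via Bismut-type integration by parts or Malliavin calculus. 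The H\"older reduction against $f-f(\theta_{s,t}(z))$ yields $|\nabla_x P_{s,t}^\mu f|\le c(t-s)^{(-3+\eta)/2}[f]_\eta$ and $|\nabla_y P_{s,t}^\mu f|\le c(t-s)^{(-1+\eta)/2}[f]_\eta$, so $\W_\eta(\L_{\bar Z^i_t|\P^0},\L_{Z^i_t|\P^0})\le c t^{(-3+\eta)/2}|Z^{i,N}_0-Z^i_0|$, which is the source of the singular prefactor in \eqref{CMY1}. Combining Steps~1--2 via the triangle inequality and using exchangeability to extract the $k/N$ factor on the $k$-marginals yields \eqref{CMY1}.

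The only genuinely new input relative to Theorem~\ref{POC10} is the $x$-direction H\"ormander gradient bound $|\nabla_x P_{s,t}^\mu f|\le c(t-s)^{-3/2}\|f\|_\infty$, which I expect to be the main technical obstacle. It quantifies the cost of propagating velocity noise into the position coordinate, and its constant must be uniform in the frozen law $\mu_t$; under the uniform regularity of $b,\sigma$ in \textbf{(A1)}--\textbf{(A2)} this is however a standard computation for Kolmogorov/Langevin-type systems, so apart from this input the proof runs in parallel to Theorem~\ref{POC10}.
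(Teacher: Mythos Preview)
Your proposal is correct and follows essentially the same route as the paper's own proof: the paper also splits into the It\^o-comparison step (yielding only $\nabla_{i,y}$ and $\nabla_{i,y}^2$ of $(P_{s,t}^\mu)^{\otimes N}F$, with integrable singularities $(t-s)^{(-j+\eta)/2}$ after the zero-noise-flow H\"older reduction) and the initial-condition step (where the full gradient estimate $|\nabla P_{s,t}^\mu f|\le c(t-s)^{-3/2}\{P_{s,t}^\mu|f|^2\}^{1/2}$ is invoked to produce the $t^{(-3+\eta)/2}$ prefactor). Your identification of the H\"ormander $x$-gradient bound as the only genuinely new ingredient, and of its role as arising solely in Step~2, matches the paper exactly.
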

\begin{proof} (1) The proof is completely the same as that of Theorem \ref{POC10}(1).

(2) The proof is more or less the same as that of Theorem \ref{POC10}(2). Let $\mu_t=\L_{(X_t^1,Y_t^1)}$. For $0\leq s\leq t\leq T$, letting $(X_{s,s}^{\mu,z},Y_{s,s}^{\mu,z})=z\in\R^{2d}$, consider the decoupled SDE $$\begin{cases}
\d X^{\mu,z}_{s,t}=Y_{s,t}^{\mu,z}\d t, \\
\d Y_{s,t}^{\mu,z}=b_t(X^{\mu,z}_{s,t},Y_{s,t}^{\mu,z}, \mu_t)\d t+  \sigma_t(X^{\mu,z}_{s,t}, Y^{\mu,z}_{s,t}, \mu_t)\d W_t^1,\ \ s\leq t\leq T.
\end{cases}$$
Let
$$P_{s,t}^\mu f(z):=\E f(X_{s,t}^{\mu,z},Y_{s,t}^{\mu,z}), \ \ f\in \scr B_b(\R^{2d}),z\in\R^{2d},$$
and for any $z=(z^1,z^2,\cdots,z^N)\in (\R^{2d})^N, F\in \scr B_b((\R^{2d})^N)$,
$$(P_{s,t}^\mu)^{\otimes N} F(z):=\int_{(\R^{2d})^N}F(w^1,w^2,\cdots,w^N)\prod_{i=1}^N\L_{(X_{s,t}^{\mu,z^i},Y_{s,t}^{\mu,z^i})}(\d w^i). $$
For simplicity, we write $P_{t}^\mu =P_{0,t}^\mu $.
letting $(\bar{X}^i_{0},\bar{Y}_{0}^{i})=(X_0^{i,N},Y_0^{i,N}), 1\leq i\leq N$, consider
$$\begin{cases}
\d \bar{X}^{i}_{t}=\bar{Y}_{t}^{i}\d t, \\
\d \bar{Y}^{i}_{t}=b_t(\bar{X}^{i}_{t},\bar{Y}^{i}_{t}, \mu_t)\d t+  \sigma_t(\bar{X}^{i}_{t}, \bar{Y}^{i}_{t}, \mu_t)\d W_t^i,\ \ 1\leq i\leq N.
\end{cases}$$
Similar to \eqref{gre} and \eqref{bke}, by {\bf (A1)}-{\bf (A2)}, it is well-known that $P_{s,t}^\mu C_b^2(\R^{2d})\subset C_b^2(\R^{2d})$ with
\begin{align}\label{gre1}
|\nabla_y^{j}P_{s,t}^\mu f|\leq c(t-s)^{-\frac{j}{2}}\{P_{s,t}^\mu |f|^2\}^{\frac{1}{2}},\ \ 0\leq s<t\leq T, f\in C_b^2(\R^{2d}),j=1,2,
\end{align}
\begin{align}\label{gre2}
|\nabla P_{s,t}^\mu f|\leq c(t-s)^{-\frac{3}{2}}\{P_{s,t}^\mu |f|^2\}^{\frac{1}{2}},\ \ 0\leq s<t\leq T, f\in C_b^1(\R^{2d}),
\end{align}
and for any $ F\in C_b^2((\R^{2d})^N)$, the backward Kolmogorov equation
\begin{align*}
&\frac{\d \{(P_{s,t}^\mu)^{\otimes N} F\}(z)}{\d s}\\
&=-\sum_{i=1}^N\{\<y^i,\nabla_{i,x}\{(P_{s,t}^\mu)^{\otimes N} F\}(z)\>+\<b_s(x^i,y^i,\mu_s),\nabla_{i,y}\{(P_{s,t}^\mu)^{\otimes N} F\}(z)\>\}\\
&-\frac{1}{2}\sum_{i=1}^N \mathrm{tr}((\sigma_s\sigma_s^\ast)(x^i,y^i,\mu_s)\nabla^2_{i,y}\{(P_{s,t}^\mu)^{\otimes N} F\}(z)),\ \ 0\leq s\leq t\leq T
\end{align*}
holds for $z=(z^1,z^2,\cdots, z^N)\in(\R^{2d})^N,z^i=(x^i,y^i),1\leq i\leq N$.
Let
\begin{align*}
B^i_s&=\frac{1}{N}\sum_{m=1}^Nb_s^{(1)}((X_{s}^{i,N},Y_{s}^{i,N}),(X_{s}^{m,N},Y_{s}^{m,N}))-\int_{\R^{2d}} b_s^{(1)}((X_{s}^{i,N},Y_{s}^{i,N}),y)\mu_s(\d y),
\end{align*}
and
\begin{align*}\Sigma^i_s
&=\frac{1}{N}\sum_{m=1}^N\tilde{\sigma}_s((X_{s}^{i,N},Y_{s}^{i,N}),(X_{s}^{m,N},Y_{s}^{m,N})) \frac{1}{N}\sum_{m=1}^N\tilde{\sigma}_s^\ast((X_{s}^{i,N},Y_{s}^{i,N}),(X_{s}^{m,N},Y_{s}^{m,N}))\\
&-\int_{\R^{2d}}\tilde{\sigma}_s((X_{s}^{i,N},Y_{s}^{i,N}),y)\mu_s(\d y)\int_{\R^{2d}}\tilde{\sigma}_s^\ast((X_{s}^{i,N},Y_{s}^{i,N}),y)\mu_s(\d y).
\end{align*}
Therefore, we conclude that
\begin{align}\label{DUH1}
&\E F((X_t^{i,N},Y_t^{i,N})_{1\leq i\leq N})-\E F((\bar{X}_t^{i},\bar{Y}_t^{i})_{1\leq i\leq N})=R_{0,t}^{b}F+R_{0,t}^{\sigma}F, \ \ F\in C_b^2((\R^{2d})^N)
\end{align}
    with
\begin{align*}
 R_{0,t}^{b}F
  &=\int_0^t \E\sum_{i=1}^N\Bigg\<B^i_s,[\nabla_{i,y}(P^\mu_{s,t})^{\otimes N}F]((X_{s}^{m,N}, Y_{s}^{m,N})_{1\leq m\leq N})\Bigg\>\d s,
   \end{align*}
   and
 \begin{align*}
R_{0,t}^{\sigma}F
    &=\frac{1}{2}\int_0^t \E\sum_{i=1}^N\mathrm{tr}\Bigg \{\Sigma^i_s[\nabla^2_{i,y}(P^\mu_{s,t})^{\otimes N}F]((X_{s}^{m,N}, Y_{s}^{m,N})_{1\leq m\leq N})\Bigg\}\d s.
\end{align*}
Let $\theta_{s,t}(z)=(\theta_{s,t}^1(z),\theta_{s,t}^2(z))$ solve
$$\begin{cases}
\ff{\d}{\d t} \theta_{s,t}^1(z)=\theta_{s,t}^2(z), \\
\ff{\d}{\d t} \theta_{s,t}^2(z)=b_t(\theta_{s,t}(z),\mu_t), \ \ t\in [s,T], \theta_{s,s}(z)=z\in\R^{2d}.
\end{cases}
$$
Again by  It\^o's formula, we find a constant $c_1>0$ and a martingale $\tilde{M}_t$  such that
\beg{align*} &\d |(X_{s,t}^{\mu,z},Y_{s,t}^{\mu,z})-\theta_{s,t}(z)|^2\\
 &= 2 \big\<X_{s,t}^{\mu,z}-\theta_{s,t}^1(z),  Y_{s,t}^{\mu,z} -\theta_{s,t}^2(z)\big\> \d t\\
&+2 \big\<Y_{s,t}^{\mu,z}-\theta_{s,t}^2(z),  b_t((X_{s,t}^{\mu,z}, Y_{s,t}^{\mu,z}),\mu_t) -b_t(\theta_{s,t}(z),\mu_t)\big\> \d t\\
&+ \| \si_t ((X_{s,t}^{\mu,z},X_{s,t}^{\mu,z}),\mu_t)\|_{HS}^2 \d t + \d \tilde{M}_t\\
&\le c_1 \big\{|(X_{s,t}^{\mu,z},Y_{s,t}^{\mu,z})-\theta_{s,t}(z)|^2 +1\Big\}\d t+\d \tilde{M}_t,\ \ \ t\in [s,T], |(X_{s,s}^{\mu,z},Y_{s,s}^{\mu,z})-\theta_{s,s}(z)|=0.\end{align*}
So, we have
$$\E\big[|(X_{s,t}^{\mu,z},Y_{s,t}^{\mu,z})-\theta_{s,t}(z)|^2\big]\le c_1 \e^{c_1 T}(t-s),\ \ \ 0\le s\le t\le T.$$
Combining this with \eqref{gre1}-\eqref{gre2}, we derive for any $f\in C_b^2(\R^{2d})$ with $[f]_\eta<1$,
\begin{align}\label{gra1}
\nonumber|\nabla^{j}_yP_{s,t}^\mu f|(z)&=|\nabla^{j}_yP_{s,t}^\mu (f-f(\theta_{s,t}(z)))|(z)\leq c(t-s)^{-\frac{j}{2}}\{P_{s,t}^\mu |f-f(\theta_{s,t}(z))|^2(z)\}^{\frac{1}{2}}\\
&\leq c(t-s)^{-\frac{j}{2}}\{\E|(X_{s,t}^{\mu,z},Y_{s,t}^{\mu,z})-\theta_{s,t}(z)|^2\}^{\frac{\eta}{2}}\\
\nonumber&\leq c_0(t-s)^{\frac{-j+\eta}{2}},\ \ 0\leq s<t\leq T,j=1,2,z\in\R^{2d},
\end{align}
and
\begin{align}\label{gre3}
|\nabla P_{s,t}^\mu f|(z)\leq c_0(t-s)^{-\frac{3+\eta}{2}},\ \ 0\leq s<t\leq T,z\in\R^{2d}
\end{align}
for some constant $c_0>0$.
With \eqref{S12}, \eqref{gre1}-\eqref{gre3} in hand, by repeating the proof of Theorem \ref{POC10}(2), we derive \eqref{CMY1} and the proof is completed.
\end{proof}
\begin{rem} From the proofs of Theorem \ref{POC10} and Theorem \ref{POC12}, if $\sigma$ only depends on the time and spatial variables, then the term $R_{0,t}^{\sigma}F$ in \eqref{DUH} and \eqref{DUH1} vanishes. In this case, $\eta$ can be equal to $0$ and the quantitative propagation of chaos in total variation distance may be derived.
\end{rem}

\end{document}